\theoremstyle{plain}
\newtheorem{theo}{Theorem}[section]
\newtheorem{lemma}[theo]{Lemma}
\newtheorem{prop}[theo]{Proposition}
\newtheorem{coro}[theo]{Corollary}
\theoremstyle{definition}
\newtheorem{defi}[theo]{Definition}
\newtheorem{rema}[theo]{Remark}
\newtheorem{exam}[theo]{Example}
\def\a{\alpha}
\def\ga{\gamma}
\def\o{\omega}
\def\C{\mathbb{C}}
\def\N{\mathbb{N}}
\def\R{\mathbb{R}}
\def\Z{\mathbb{Z}}
\def\M{\textbf{M}}
\def\bM{\textbf{M}}
\begin{document}

\title{On the summability of a class of formal power series}
\author{A. Lastra, J. Sanz and J. R. Sendra}

\maketitle

\begin{center}
{\bf Abstract}
\end{center}

The formal power series solutions for some classes of moment differential equations, induced by polynomial moment differential operators, are characterized in terms of their summability properties, and in terms of estimates for recursive expressions involving their coefficients. Of special interest are the particularization of these results to classes of fractional  and of ordinary differential equations.
The Stokes' phenomenon can be described in some of these situations.
The main results are extended into the framework of $q$-Gevrey asymptotics and $q$-difference equations.\vspace{0.5cm}

\noindent Key words: Gevrey asymptotic expansions, formal power series, summability, moment differential equations, proximate orders, $q$-Gevrey asymptotics.\par
\noindent 2010 MSC: Primary 34A25; Secondary: 34A08, 34K25, 40C10.

\section{Introduction}

In 1886, H. Poincar\'e put forward the concept of asymptotic expansion at 0 for holomorphic functions defined in an open sector in $\C$ with vertex at the origin. He intended to give an analytic meaning to formal (in general, divergent) power series solutions of ordinary differential equations at irregular singular points.

In the late 1970's, J. P. Ramis~\cite{ramis1,ramis2} developed the theory of $k$-summability of formal power series, proving that every formal power series solution to a linear system of meromorphic ordinary differential equation in the complex domain at an irregular singular point can be decomposed into a finite product of formal power series, each of which turns out to be $k$-summable, for some order $k>0$ which depends on the series.

The previous result was only obtained in a theoretical way, and it was improved by J. Ecalle~\cite{ecalle1,ecalle2} through the introduction of multisummability. It turned out that any formal power series solution of a (linear or not) meromorphic system of ordinary differential equations at an irregular singularity is, indeed, multisummable~\cite{braaksma,ramissibuya,balser}, and an algorithm is available in order to compute actual (i.e., analytic) solutions departing from formal ones. Moreover, B. Malgrange gave a method based on the Newton polygon in order to find the exact orders involved in the multisummability process.


It is worth mentioning a recent paper of O. Costin and X. Xia~\cite{CostinXia}, where a criteria is given on the Taylor coefficients for the associated analytic function not to have natural boundaries and to belong to the class of functions analytic in the complex plane with finitely many cuts and with algebraic behavior at infinity.
Their condition is that the coefficients admit generalized Ecalle-Borel summable transseries, a property shared by solutions of many general classes of problems in analysis. In particular, this is the case whenever the coefficients solve a generic linear or nonlinear recurrence relation of finite order with analytic coefficients, see~\cite{Costin,braaksma0,ecalle3}. Such recurrence relations exist for instance when the coefficients are obtained by solving differential equations by power series.
This paper intends to be a contribution to such kind of problems.

In the present study, we provide a simple property of a formal power series in order to be summable along certain well chosen directions: its coefficients, when inserted in a given recurrence relation of finite order, provide values whose growth may be suitably controlled. Moreover, we describe a family of differential, moment-differential (including some fractional differential equations involving Caputo's fractional derivatives) and $q$-difference equations for which such a formal power series may appear as a solution, and the corresponding summability procedures, namely $k$-summability, summability with respect to a sequence admitting a nonzero proximate order, and $q$-Gevrey summability, respectively.

We will also comment on the possibility of describing the Stokes' phenomenon in some of these situations, whenever the structure of the singularities of the Borel transform of the formal solutions is simple. So, the difference between neighboring solutions at both sides of a singular direction may be explicitly computed.


The layout of this work is as follows.
Section~\ref{sectPrelim} is mainly devoted to give a brief summary of the main definitions and results concerning general asymptotic expansions of functions defined in sectors of the Riemann surface of the logarithm, and the related concept and technique of summability in a direction, all with respect to a sequence of positive real numbers admitting a nonzero proximate order (see~\cite{lastramaleksanz15}). This generalizes the classical Gevrey asymptotic theory and the corresponding $k$-summability of formal power series, developed by J.-P. Ramis~\cite{ramis1,ramis2}.
In Section~\ref{sectCharacSummabODE}, we describe a family of formal power series appearing as formal solutions of certain moment differential equations, introduced by W. Balser and M. Yoshino~\cite{balseryoshino}, see Theorem~\ref{teoEqM}. As a natural application of these kind of problems, we may mention the case when the sequence of moments gives rise to the so-called fractional Caputo's derivatives, and so one enters the framework of fractional differential equations. 
Next, in Theorem~\ref{teo305ordens} we extend our considerations to a particular type of formal power series of some Gevrey order $s\in\N$, thanks to the crucial fact, due to W. Balser, that the termwise product of sequences of moments of some Gevrey order is again a sequence of moments.

In Section~\ref{sectqGevrey} we indicate the main facts that allow us to provide new insights regarding some formal power series solutions of a class of $q$-difference equations. Here, the $q$-Gevrey asymptotics and $q$-summability theory, mainly studied by J. P. Ramis and C. Zhang (see~\cite{zhang}), play a prominent role. Unlike in the previous approaches, there is no unique sum naturally associated with a formal $q$-Gevrey series in a direction, what leads to the specification of a natural sum through a prescribed variation.

\section{Preliminaries}\label{sectPrelim}

\subsection{Notation}

Let $\mathcal{R}$ denote the Riemann surface of the logarithm.
Let $\theta>0$, $d\in\R$ and $r>0$. We write $S_d(\theta,r)$ for the bounded sector with vertex at the origin, opening $\theta\,\pi$ and bisecting direction $d$, given by
$$S_{d}(\theta,r)=\{z\in\mathcal{R}:|\hbox{arg}(z)-d|<\frac{\theta\pi}{2},0<|z|<r\}.$$
We also consider unbounded sectors
$$S_{d}(\theta):=\{z\in\mathcal{R}:|\hbox{arg}(z)-d|<\frac{\theta\pi}{2}\}.$$
A sectorial region $G_d(\a)$ with bisecting direction $d\in\R$ and opening $\alpha\pi$ will be a domain in $\mathcal{R}$ such that $G_d(\a)\subset S_d(\a)$, and
for every $\beta\in(0,\a)$ there exists $\rho=\rho(\beta)>0$ with $S_d(\beta,\rho)\subset G_d(\a)$. In particular, sectors are sectorial regions.\par\noindent
A sector $T$ is a bounded proper subsector of a sectorial region $G$ (denoted by $T\prec G$) whenever the radius of $T$ is finite and $\overline{T}\subset G$ (the closure is considered in $\mathcal{R}$).

We write $\N_0=\{0,1,2,\dots\}$ and $\N=\{1,2,\dots\}$. $\mathcal{O}(S)$ stands for the set of holomorphic functions in $S$, and $\C[[z]]$ is the set of formal power series with complex coefficients.

\subsection{Summability of formal power series}

The theory of summability of formal solutions of different kinds of functional equations (differential, difference, $q$-difference, etc.) is intimately related to
asymptotics.
This section is devoted to a general overview of both asymptotics and summability of formal power series in a direction, with respect to a sequence of positive numbers admitting a nonzero proximate order. We provide the results without proof, which can be found in
~\cite{javinuevo,lastramaleksanz15,JimenezSanzSchindlLCSNPO,JimenezSanzSchindlInjSurj}. The classical notions of Gevrey asymptotics and $k$-summability are a particular case in this framework, see~\cite{ramis1,ramis2,balser} and Remark~\ref{remaKsummability}.

In what follows, $\bM=(M_{p})_{p\in\N_0}$ stands for a sequence of positive real numbers.
%
%
%
%


\begin{defi}
Let $G$ be a sectorial region with vertex at the origin and $f\in\mathcal{O}(G)$. We say $f$ admits $\widehat{f}(z)=\sum_{n=0}^{\infty}a_nz^n\in\C[[z]]$ as its $\bM$-asymptotic expansion in $G$ if for every $T\prec G$ there exist $A=A(T)>0$ and $C=C(T)>0$ such that for every $n\in\N_0$ one has
$$\big|f(z)-\sum_{p=0}^{n-1}a_pz^p\big|\le CA^nM_n|z|^n,\quad z\in T.$$
\end{defi}

We write $f\sim_{\bM}\sum_{n=0}^{\infty}a_nz^n$ in $G$. $\widetilde{\mathcal{A}}_{\M}(G)$ stands for the linear space of
functions admitting $\bM$-asymptotic expansion in $G$.
%
%
Accordingly, we define the linear space of formal power series
$$\C[[z]]_{\M}=\Big\{\widehat{f}(z)=\sum_{n=0}^{\infty}a_nz^n: \textrm{there exist $C,A>0$ with }|a_n|\le CA^{n}M_{n},\ n\in\N_{0}\Big\}.$$
The linear map $\widetilde{\mathcal{B}}:\widetilde{\mathcal{A}}_{\M}(G)\longrightarrow \C[[z]]_{\M}$ sending a function to its $\M$-asymptotic expansion 
will be called the \textit{asymptotic Borel map}. It is a
homomorphism of algebras if $\M$ is \textit{logarithmically convex} (for short, (lc)), that is, $M_{p}^{2}\le M_{p-1}M_{p+1}$ for every $p\in\N$.

As a consequence of Taylor's formula and Cauchy's integral formula for the derivatives, we have the following result (see \cite{balser} for a proof in the Gevrey case, which may be easily adapted to this more general situation).

\begin{prop}\label{propcotaderidesaasin}
Let $G$ be a sectorial region and $f\in\mathcal{O}(G)$. Then, $f\in\widetilde{\mathcal{A}}_{\M}(G)$ if, and only if,
for every $T\prec G$ there exist $C_T,A_T>0$ such that for every $p\in\N_0$ and $z\in T$, one has $|f^{(p)}(z)|\le C_TA_T^p p!M_p$.
\end{prop}


\begin{defi}
A function $f\in\widetilde{\mathcal{A}}_{\M}(G)$ is said to be \textit{flat} (or $\M$-\textit{flat}) if $\widetilde{\mathcal{B}}(f)$ is the null series, i.e., $f\sim_{\M}\widehat{0}$.
We say that $\mathcal{\widetilde{A}}_{\M}(G)$ is  \textit{quasianalytic} if it does not contain nontrivial flat functions; in other words,
the asymptotic Borel map is injective in the class.
\end{defi}

The study of quasianalyticity, for ultraholomorphic classes subject to uniform bounds either for the derivatives (as suggested by Proposition~\ref{propcotaderidesaasin}) or for the asymptotics, has been made by several authors, both in one~\cite{mandel,Salinas} or several variables~\cite{groening,lastrasanzquasi}.
In order to avoid trivial situations, we restrict ourselves to \textit{weight sequences}, i.e., (lc) sequences with $\lim_{p\to\infty}M_{p+1}/M_p=\infty$.


The concept of proximate order, relevant in the theory of growth of holomorphic functions in sectors (see, for example,~\cite{GoldbergOstrowskii}), plays a prominent role in the following result, which appeared in a weaker form in~\cite{javinuevo}.

\begin{coro}[Watson's Lemma,\ \cite{JimenezSanzSchindlInjSurj}]\label{coroWatsonlemma}
Given a weight sequence $\M$ and $\ga>0$, $\widetilde{\mathcal{A}}_{\M}(G_0({\gamma}))$ is quasianalytic if, and only if,  $\gamma>\omega(\M)$, where
\begin{equation*}
\omega(\bM)=\liminf_{p\to\infty}
\frac{\log(M_{p+1}/M_p)}{\log(p)}\in[0,\infty].
\end{equation*}

\end{coro}

We are ready for the definition of summability in a direction in this context.

\begin{defi}[\cite{lastramaleksanz15}]\label{defisumable}
Let $d\in\R$ and $\M$ be a weight sequence.
We say $\widehat{f}=\sum_{p\ge 0}\displaystyle a_pz^p$ is \textit{$\M$-summable in direction $d$} if there exist a sectorial region $G=G_d(\ga)$, with $\ga>\o(\M)$, and
a function $f\in\widetilde{\mathcal{A}}_{\M}(G)$ such that $f\sim_{\M}\widehat{f}$.
\end{defi}

According to Watson's Lemma, $f$ is unique with the property stated and will be called
the $\M$-sum of $\widehat f$ in direction $d$. In order to explicitly construct this sum, we need to introduce an auxiliary function
$\omega_{\bM}:(0,\infty)\to\R$ given by
\begin{equation*}
\omega_{\bM}(t)=
\sup_{p\in\N_0}\log\big(\frac{t^p}{M_p}\big).
\end{equation*}

As it may be found in~\cite{lastramaleksanz15,JimenezSanzSchindlLCSNPO}, whenever $\M$ admits 
a nonzero proximate order $\rho(t)$, i.e., there exist constants $A,B>0$ with
$$
A\le \frac{\o_{\M}(t)}{t^{\rho(t)}}
\le B,\quad t\textrm { large enough},
$$
one may construct pairs of $\M$-summability kernel functions $e(z)$ and $E(z)$, and the moment function associated with $e(z)$, that is,
$$m(\lambda):=\int_{0}^{\infty}t^{\lambda-1}e(t)dt,\quad \Re(\lambda)>0.$$
The so-called sequence of moments $\mathfrak{m}=(m(p))_{p\in\N_0}$ turns out to be equivalent to $\M$ (in the sense that there  exist $B,C>0$ such that $B^pM_p\le m(p)\le C^pM_p$ for every $p\in\N_0$). Then, suitable Laplace- and Borel-like formal and analytic transforms allow for the reconstruction of the sum, as the next result shows.

\begin{theo}
Suppose the sequence $\M$ admits a nonzero proximate order, $d$ is a direction and $\widehat f=\sum_{n\ge 0}\displaystyle a_nz^n$. The following are equivalent:
\begin{itemize}
\item[(i)] $\widehat f$ is $\M$-summable in direction $d$.
\item[(ii)] For every (some) kernel $e$ of $\M$-summability, its formal Borel transform $g:=\widehat{\mathcal{B}}_{\mathfrak{m}}(\widehat{f}):=\sum_{n\ge 0}\displaystyle\frac{a_n}{m(n)}z^n$ converges, i.e. has positive radius of convergence,
    it admits analytic continuation in an unbounded (narrow enough) sector $S$ bisected by $d$, and moreover is of $\M$-growth in $S$, i.e., for every unbounded subsector $T$ of $S$ there exist $k_1,k_2>0$ such that for every $z\in T$,
\begin{equation}\label{crecimientoM}
|g(z)|\le 
k_1\exp(\o_{\M}(k_2|z|)).
\end{equation}
\end{itemize}
In case any of the previous holds, the $\M$-sum of $\widehat f$ in direction $d$ can be constructed as an $\bM$-analogue of Laplace transform,
$$f(z)=\int_{0}^{\infty(d)}e(u/z)g(u)\frac{du}{u},\quad |\arg(z)-d|<\omega(\bM)\pi/2,\ |z|\hbox{ small enough.}$$
\end{theo}

The previous integral is the line integral along the path parameterized by $s\in(0,\infty)\mapsto se^{id}$. The analytic continuation of $f$ to a sectorial region bisected by $d$ and with opening larger than $\omega(\bM)\pi$ is obtained by changing the line of integration into neighboring directions $d'$ within $S$.
%
%


\begin{exam}\label{remaKsummability}
For $k>0$ and the Gevrey sequence $\M_{1/k}=(p!^{1/k})_{p\in\N_0}$, the classical Gevrey asymptotic theory and the $k$-summability method, introduced by J.-P. Ramis~\cite{ramis1,ramis2,balser}, are obtained. In this case, $\omega(\M_{1/k})=1/k$, the function $\omega_{\M_{1/k}}(t)$ grows like $t^k$ at infinity, and $\M_{1/k}$ admits the constant proximate order $\rho(t)=k$. A kernel of $k$-summability is $e(z)=kz^{k}\exp(-z^k)$, for which the sequence of moments is
$\mathfrak{m}=(\Gamma(1+p/k))_{p\in\N_0}$, where $\Gamma$ denotes the Eulerian Gamma function. So, a series $\widehat{f}(z)=\sum_{p\ge0}f_{p}z^{p}$ is $k$-summable in direction $d$ if, and only if, its formal Borel transform of order $k$, $\widehat{\mathcal{B}}_k\widehat{f}(z):=\sum_{p\ge0}\frac{f_{p}}{\Gamma(1+p/k)}z^{p}$,
has positive radius of convergence, its sum $g$ admits analytic continuation into an unbounded sector $S$ bisected by $d$, and there exist $c_1,c_2>0$ with
\begin{equation}\label{e163}
|g(z)|\le c_1\exp\left(c_2|z|^{k}\right),\quad z\in S.
\end{equation}
If these statements hold, then the $k$-sum of $\widehat f$ in direction $d$ is given by
\begin{equation}\label{equakSumDirectiond}
f(z)=kz^{-k}\int_{0}^{\infty(d)}g(u)e^{-(u/z)^{k}}u^{k-1}du
\end{equation}
whenever both sides of the equality are defined.
\end{exam}

\begin{rema}
Weight sequences admitting a proximate order are strongly regular (in the terminology of V. Thilliez): they are (lc), of moderate growth (there exists $A>0$ such that
$M_{p+q}\le A^{p+q}M_{p}M_{q}$ for all $p,q\in\N_0$), and satisfy the strong non-quasianalyticity condition, i.e., there exists $B>0$ such that
$$
\sum_{q\ge p}\frac{M_{q}}{(q+1)M_{q+1}}\le B\frac{M_{p}}{M_{p+1}},\qquad p\in\N_0.
$$
These conditions are quite classical and they naturally appear in the literature~\cite{komatsu} concerning the study of so-called Carleman ultradifferentiable or ultraholomorphic classes, consisting of $\mathcal{C}^{\infty}$ (respectively, analytic) functions whose derivatives' growth is controlled mainly by the given sequence $\bM$.

Every example of strongly regular sequence appearing in the applications admits a nonzero proximate order. This is the case for the sequence $(p!^{\alpha}\prod_{m=0}^{p}\log^\beta(e+p))_{p\ge0}$, where $\alpha>0$ and $\beta\in\R$, which is strongly regular (in case $\beta<0$, after some inessential changes like scaling or adjusting a finite number of terms).
However, there do exist strongly regular sequences not admitting such a proximate order, see~\cite{JimenezSanzSchindlLCSNPO}.

For $q>1$ the sequence $(q^{\frac{p(p-1)}{2}})_{p\ge 0}$, which will be considered later, is not strongly regular as it does not have moderate growth, and so it does not admit a nonzero proximate order.
\end{rema}

\section{Characterization of the summability of certain formal solutions of ODEs}\label{sectCharacSummabODE}

The forthcoming generalization of the derivative operator was firstly studied by W. Balser and M. Yoshino~\cite{balseryoshino}: Given a sequence of moments $\mathfrak{m}=(m(p))_{p\ge0}$ as before, one can define the moment differential operator $\partial_{\mathfrak{m}}$ on formal power series with complex coefficients by
$$\partial_{\mathfrak{m}}\left(\sum_{p\ge0}\frac{f_p}{m(p)}z^p\right)= \sum_{p\ge0}\frac{f_{p+1}}{m(p)}z^p,$$
and this definition may be proved (see~\cite{Michalik1,lastramaleksanz15}) to extend to functions holomorphic around the origin. Observe that this operator coincides with the usual derivative when working with the sequence $m(p):=p!=\Gamma(1+p)$, moments of the kernel function $e(z)=ze^{-z}$, corresponding to 1-summability.

Moment partial differential equations, related to these operators, have already been studied in~\cite{balseryoshino} and by S. Michalik~\cite{michalik, Michalik5}, in the case that $(m(p))_{p\in\N_0}$ is equivalent to a Gevrey sequence, and in the general case in \cite{lastramaleksanz15}.

As a matter of fact, the particularization of the sequence of moments in the form $$\mathfrak{m}^{1/k}:=\Big(\Gamma\left(1+\frac{p}{k}\right)\Big)_{p\in\N_0},
$$
for some $k\in\N$, is intimately related to the Caputo fractional derivative of order $1/k$, denoted by $\partial_{x}^{1/k}$. Indeed, $(\partial_{\mathfrak{m}^{1/k}}\widehat{f})(z^{1/k})$ coincides with $\partial_{x}^{1/k}(\widehat{f}(x^{1/k}))$ for every formal power series $f$. In particular, one has $\partial_{z}^{1/k}z^{0/k}=0$, and
$$\partial_{z}^{1/k}z^{n/k}=\frac{\Gamma(1+\frac{n}{k})}{\Gamma(1+\frac{n-1}{k})}z^{(n-1)/k},$$
for every $n\in\N$. The action of this operator coincides with that of Caputo's derivative. Following~\cite{kilbas}, Caputo derivative, which is defined for $\alpha\notin\N_0$, $n\in\N$, $n-1<\alpha<n$, by
$$^{C}D^{\alpha}_{0+}\varphi(z):= \frac{1}{\Gamma(n-\alpha)} \int_{0}^{z}\frac{\varphi^{(n)}(t)}{(z-t)^{\alpha-n-1}}dt,$$
turns out to satisfy
$$^{C}D^{\alpha}_{0+}z^{\beta}= \frac{\Gamma(1+\beta)}{\Gamma(1+\beta-\alpha)}z^{\beta-\alpha},\quad n-1<\beta,\ \beta\in\N,
$$
and
$$
^{C}D^{\alpha}_{0+}z^{k}=0,\quad k=0,\ldots,n-1.$$


The next results provide new insight for some kind of ordinary moment differential equations.

\begin{lemma}
Let $\bM$ admit a nonzero proximate order, $e$ be a kernel function associated with $\bM$ and  $\mathfrak{m}=(m(p))_{p\in\N_0}$ be the sequence of its moments.
Then, for every formal power series $\widehat{f}\in\C[[z]]$ one has
$$\tau\widehat{\mathcal{B}}_{\mathfrak{m}}(\widehat{f})(\tau)= \widehat{\mathcal{B}}_{\mathfrak{m}}
\big(z\partial_\mathfrak{m}(z\widehat{f}(z)\big)(\tau).$$
\end{lemma}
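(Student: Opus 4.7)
The plan is to verify this purely formal identity by direct term-by-term computation, in the same spirit as Lemma~\ref{lemaBorelformalorden1}. Since $\hat{\mathcal{B}}_{\mathfrak{m}}$, $\partial_{\mathfrak{m}}$ and multiplication by $z$ are all $\mathbb{C}$-linear maps acting on $\mathbb{C}[[z]]$ in a coefficient-wise manner, it suffices to prove the equality when $\hat{f}(z)=z^p$ for an arbitrary $p\in\N_0$, and then recover the general case by linearity and reassembly of coefficients.

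For the left-hand side, using $\hat{\mathcal{B}}_{\mathfrak{m}}\big(\sum_q c_q z^q\big)(\tau)=\sum_q \frac{c_q}{m(q)}\tau^q$, one immediately gets
$$
\tau\,\hat{\mathcal{B}}_{\mathfrak{m}}(z^p)(\tau)=\frac{\tau^{p+1}}{m(p)}.
$$
For the right-hand side, first $z\hat{f}(z)=z^{p+1}$. The definition of the moment derivative is $\partial_{\mathfrak{m}}\big(\sum_q \frac{c_q}{m(q)}z^q\big)=\sum_q \frac{c_{q+1}}{m(q)}z^q$, so I would rewrite $z^{p+1}=\frac{m(p+1)}{m(p+1)}z^{p+1}$ and apply the shift to obtain $\partial_{\mathfrak{m}}(z^{p+1})=\frac{m(p+1)}{m(p)}z^p$. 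Multiplying by $z$ gives $\frac{m(p+1)}{m(p)}z^{p+1}$, and applying $\hat{\mathcal{B}}_{\mathfrak{m}}$ finally yields
$$
\hat{\mathcal{B}}_{\mathfrak{m}}\bigl(z\,\partial_{\mathfrak{m}}(z\cdot z^p)\bigr)(\tau)=\frac{m(p+1)}{m(p)}\cdot\frac{\tau^{p+1}}{m(p+1)}=\frac{\tau^{p+1}}{m(p)},
$$
matching the left-hand side. Summation over $p$, weighted by the coefficients of $\hat{f}$, closes the argument.

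No serious obstacle is expected: the identity is purely algebraic at the level of formal power series, and no property of $\bM$ beyond positivity of the moments $m(p)$ is used; in particular, strong regularity of $\bM$, the proximate-order condition on $d_{\M}$, and the analytic machinery of the kernel $e$ play no role here. The only mild point of care is bookkeeping when applying $\partial_{\mathfrak{m}}$: each monomial must first be expressed in the canonical form $\frac{c_q}{m(q)}z^q$ before the definition of $\partial_{\mathfrak{m}}$ is invoked, which is exactly where the factor $m(p+1)/m(p)$ appears and then cancels against the $1/m(p+1)$ produced by $\hat{\mathcal{B}}_{\mathfrak{m}}$.
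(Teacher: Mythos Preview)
Your proposal is correct and follows essentially the same approach as the paper, which simply states that ``the proof follows straightforward from the definitions of $\hat{\mathcal{B}}_{\mathfrak{m}}$ and $\partial_\mathfrak{m}$.'' Your term-by-term verification on monomials is exactly the unwinding of those definitions, and your observation that the hypotheses on $\bM$ play no role in this purely formal identity is also accurate.
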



The proof follows straightforward from the definitions of $\widehat{\mathcal{B}}_{\mathfrak{m}}$ and $\partial_\mathfrak{m}$. In the forthcoming statement, for a monomial $\lambda^n$ the expression $\big(z\partial_\mathfrak{m}(z\,\cdot\,)\big)^n$ is to be understood as the operator computing the $n$-th iteration of the operator $z\partial_\mathfrak{m}(z\,\cdot\,)$ that multiplies by $z$, then applies $\partial_\mathfrak{m}$ and again multiplies by $z$.

\begin{theo}\label{teoEqM}
Let $\bM$,  $e$ and $\mathfrak{m}=(m(p))_{p\ge0}$ be as before.
Let $\widehat{f}(z)=\sum_{p\ge1}f_{p}z^{p}\in\C[[z]]$ be a formal power series. The following statements are equivalent:
\begin{enumerate}
\item[(1)] $\widehat{f}$ is the formal solution of $P\big(z\partial_\mathfrak{m}(z\,\cdot\,)\big)y=g(z)$, for some polynomial $P\in\C[z]$ with $P(0)\neq 0$ and some $g\in\C\{z\}$.
\item[(2)] There exist $r\in\N$, complex numbers $a_1,\dots,a_r$ and positive constants $C,M$ such that
\begin{equation}\label{e228bis}
\Big|\frac{f_{j}}{m(j)}-\sum_{k=j-r}^{j-1}a_{j-k}\frac{f_k}{m(k)}\Big|\le \frac{CM^j}{m(j)}
\end{equation}
holds for every $j>r$.
\item[(3)] $\widehat{f}$ is $\M$-summable in any direction $d$ but the arguments of the roots of a polynomial
$$h(z)=1-a_1z-\dots-a_rz^r,$$
and its sum, say $f\in\mathcal{O}(S_d)$ for some sector $S_d$ of opening larger than $\pi\o(\M)$, is an actual solution of the problem $P\big(z\partial_\mathfrak{m}(z\,\cdot\,)\big)y=g(z)$, for some polynomial $P\in\C[z]$, with $P(0)\neq 0$, and some $g\in\C\{z\}$ which do not depend on the choice of $d$.
\end{enumerate}
\end{theo}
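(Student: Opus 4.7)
The plan is to mimic the proofs of Theorem~\ref{teo305} and Theorem~\ref{teo305ordens}, now with the moment--Borel transform $\hat{\mathcal{B}}_{\mathfrak{m}}$ in place of $\hat{\mathcal{B}}_1$ (respectively $\tilde{\mathcal{B}}_k$) and with the preceding lemma as the intertwining engine. Iterated, that lemma gives the formal identity $\hat{\mathcal{B}}_{\mathfrak{m}}\big(P(z\partial_{\mathfrak{m}}(z\,\cdot\,))\hat{f}\big)(\tau)=P(\tau)\hat{\mathcal{B}}_{\mathfrak{m}}(\hat{f})(\tau)$ for every polynomial $P$. For $(1)\Rightarrow(2)$ I would first establish by induction on $m$ the formula
$$\big(z\partial_{\mathfrak{m}}(z\,\cdot\,)\big)^m\!\bigg(\sum_{p\ge 1}f_p z^p\bigg)=\sum_{p\ge m+1}\frac{m(p)}{m(p-m)}f_{p-m}z^p,$$
normalize so that $P(z)=1-\sum_{p=1}^r a_p z^p$, and identify the coefficient of $z^j$ in $P(z\partial_{\mathfrak{m}}(z\,\cdot\,))\hat{f}=g$ for $j>r$. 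This produces $f_j-\sum_{k=j-r}^{j-1}a_{j-k}\tfrac{m(j)}{m(k)}f_k=b_j$, where $b_j$ is the $j$-th Taylor coefficient of $g$; dividing by $m(j)$ and invoking $|b_j|\le CM^j$ yields \eqref{e228bis}.

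For $(2)\Rightarrow(3)$, I would set $f_0=\dots=f_{1-r}:=0$ and form
$$b(z):=h(z)\,\hat{\mathcal{B}}_{\mathfrak{m}}\hat{f}(z)=\sum_{j\ge 1}\beta_j z^j,\qquad \beta_j:=\frac{f_j}{m(j)}-\sum_{k=j-r}^{j-1}a_{j-k}\frac{f_k}{m(k)}.$$
By \eqref{e228bis}, $|\beta_j|\le CM^j/m(j)$. Using the equivalence $\mathfrak{m}\sim\bM$, the supremum description $M(t)=\sup_p\log(t^p/M_p)$, and the moderate growth property of $\bM$, I would then derive a global bound $|b(z)|\le C_1\exp(M(C_2|z|))$ on $\C$. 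For each direction $d$ not equal to the argument of any root of $h$, $|h(z)|$ is bounded below in an unbounded sector bisected by $d$, so $\hat{\mathcal{B}}_{\mathfrak{m}}\hat{f}=b/h$ extends analytically to such a sector while still satisfying an estimate of the form \eqref{crecimientoM}. The $\bM$-analogue of the Borel--Laplace criterion from \cite{lastramaleksanz14} then delivers $\bM$-summability of $\hat{f}$ in direction $d$, with sum $f$ defined in some $G_d(\gamma)$, $\gamma>\o(\bM)$.

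To identify the equation I would define $g(z):=\sum_{j\ge 1}m(j)\beta_j z^j$, which lies in $\C\{z\}$ since $|m(j)\beta_j|\le CM^j$, and observe that the coefficient computation of the first paragraph already shows that $h(z\partial_{\mathfrak{m}}(z\,\cdot\,))\hat{f}-g$ is the null formal series (taking $P:=h$). Since $P(z\partial_{\mathfrak{m}}(z\,\cdot\,))$ acts on $\tilde{\mathcal{A}}_{\bM}(G_d(\gamma))$ and respects $\bM$-asymptotic expansions, $P(z\partial_{\mathfrak{m}}(z\,\cdot\,))f-g$ admits the null $\bM$-asymptotic expansion in $G_d(\gamma)$; as $\gamma>\o(\bM)$, Watson's Lemma (Corollary~\ref{coroWatsonlemma}) forces it to vanish identically, and independence of $g$ from $d$ is built into the construction. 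Finally, $(3)\Rightarrow(1)$ follows at once by passing to $\bM$-asymptotic expansions of both sides of the equation for $f$. The main obstacle I anticipate is the passage from the termwise bound $|\beta_j|\le CM^j/m(j)$ to the global estimate $|b(z)|\le C_1\exp(M(C_2|z|))$ and its preservation under division by $h$: the Gevrey proof rested on fixed exponential order $k$ alone, whereas here one must argue intrinsically with the auxiliary function $M(t)$, use (mg) to sum the termwise bounds cleanly, and rely on the standing hypothesis that $d_{\bM}$ is a proximate order to invoke the Borel--Laplace machinery of \cite{lastramaleksanz14} in the correct opening.
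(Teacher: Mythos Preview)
Your proposal is correct and follows essentially the same approach as the paper's own proof: the same induction formula for $(z\partial_{\mathfrak{m}}(z\,\cdot\,))^m$, the same factorization $h(z)\hat{\mathcal{B}}_{\mathfrak{m}}\hat{f}(z)=b(z)$ with the entire function $b$ of $\bM$-growth, and the same appeal to Watson's Lemma (Corollary~\ref{coroWatsonlemma}) to pass from a null $\bM$-asymptotic expansion to an actual solution. If anything, you are more explicit than the paper about the technical step of turning the termwise bound $|\beta_j|\le CM^j/m(j)$ into a global estimate $|b(z)|\le C_1\exp(M(C_2|z|))$ via the equivalence $\mathfrak{m}\sim\bM$ and property (mg), which the paper asserts without detail.
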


\begin{proof}
$(1)\Rightarrow(2)$ We assume that $\widehat{f}$ is a formal solution of $P\big(z\partial_\mathfrak{m}(z\,\cdot\,)\big)y=g(z)$ for some polynomial $P\in\C[z]$, with $P(0)\neq 0$, and some $g\in\C\{z\}$. Up to multiplication by a constant factor, we may assume without loss of generality that $P(z)=1-\sum_{p=1}^{r}a_pz^p$ and $g(z)=\sum_{p=0}^{\infty}b_{p}z^p$.
It is clear that
$$(z\partial_\mathfrak{m}(z\,\cdot\,))\left(\sum_{p\ge1}f_pz^p\right)= \sum_{p\ge2}\frac{m(p)f_{p-1}}{m(p-1)}z^p$$
and, by induction on $k\in\N$,
$$(z\partial_\mathfrak{m}(z\,\cdot\,))^k\left(\sum_{p\ge1}f_pz^p\right)=
\sum_{p\ge k+1}\frac{m(p)f_{p-k}}{m(p-k)}z^p,$$
for every $k\ge2$.
If one plugs $\widehat{f}$ into the equation, then for $j> r$ the coefficient of $z^j$ in $P\big(z\partial_\mathfrak{m}(z\,\cdot\,)\big)\widehat{f}$ is
$$f_j-m(j)\sum_{k=j-r}^{j-1}a_{j-k}\frac{f_k}{m(k)},$$
which should equal $b_{j}$.
Since $g$ converges, there exist $C,K>0$ with
$|b_p|\le CK^p$ for every $p\in\N$, and so for every $j>r$,
\begin{equation*}
\Big|\frac{f_{j}}{m(j)}-\sum_{k=j-r}^{j-1}a_{j-k}\frac{f_k}{m(k)}\Big|\le \frac{CK^j}{m(j)},
\end{equation*}
as desired.\par\noindent
$(2)\Rightarrow(3)$ Consider the formal Borel transform of $\widehat{f}$,
$\widehat{\mathcal{B}}_{\mathfrak{m}}\widehat{f}(z)=\sum_{p\ge 1}f_{p+1}m(p+1)z^p/m(p)$, and the polynomial
$h(z)=1-a_1z-\dots-a_rz^r$. If we put $f_0:=0$,\dots,$f_{1-r}:=0$, it is clear that
$$h(z)\cdot\widehat{\mathcal{B}}_{\mathfrak{m}}\widehat{f}(z)=\sum_{j\ge 1}\Big(\frac{f_{j}}{m(j)}-\sum_{k=j-r}^{j-1}a_{j-k}\frac{f_k}{m(k)}\Big)z^j.$$
The estimates in (\ref{e228bis}) imply that the series on the right-hand side, say $b(z)=\sum_{j\ge 1}b_jz^j/m(j)$, defines an entire function which, taking into account the equivalence of $\M$ and the sequence of moments, and by the definition of the auxiliary function $\omega_{\M}(t)$, turns out to satisfy estimates as those in \eqref{crecimientoM}. The same is true then for the function $b(z)/h(z)$ (holomorphic in $\C$ with the roots of $h$ deleted, in particular, in a neighborhood of the origin) in any open unbounded sector with vertex at 0 and not containing any of these roots. So, $\widehat{f}$ is $\M$-summable in every direction $d$ which is not an argument of any of the roots of $h$, as desired. Let $f$ be the $\M$-sum of $\widehat{f}$ in an appropriate direction $d\in\R$, defined in a sector $S_d$ of opening larger than $\pi\o(\M)$. According to previous computations, the coefficient of $z^j$ in $h(z\partial_\mathfrak{m}(z\,\cdot\,))\widehat{f}$ is
$$f_{j}-m(j)\sum_{k=j-r}^{j-1}a_{j-k}\frac{f_k}{m(k)}=b_j.$$
Put $g(z):=\sum_{j\ge 1}b_jz^j$, which belongs to $\C\{z\}$.
We have that $h(z\partial_\mathfrak{m}(z\,\cdot\,))\widehat{f}(z)-g(z)$ is the null formal power series. The stability properties of $\M$-asymptotic expansions with respect to sums and products guarantee that $h(z\partial_\mathfrak{m}(z\,\cdot\,))f(z)-g(z)$ admits null $\M$-asymptotic expansion in $\widetilde{S}_d$, and by Watson's Lemma we conclude that $f$ solves the equation $h(z\partial_\mathfrak{m}(z\,\cdot\,))y=g(z)$ in $S_d$.\par\noindent
$(3)\Rightarrow(1)$ Since $\widehat{f}$ is the asymptotic expansion of $f$ in a suitable sector, it is readily deduced from the elementary properties of asymptotic expansions that, whenever $f$ solves the differential equation, $\widehat{f}$ also solves the corresponding formal one.
\end{proof}

We may easily deduce the following corollary, treating the case of a polynomial inhomogeneous term.

\begin{coro}\label{coroEqMpolynomial}
Let $\bM$,  $e$ and $\mathfrak{m}=(m(p))_{p\ge0}$ be as before.
Let $\widehat{f}(z)=\sum_{p\ge1}f_{p}z^{p}\in\C[[z]]$ be a formal power series. The following statements are equivalent:
\begin{enumerate}
\item[(1)] $\widehat{f}$ is the formal solution of $P\big(z\partial_\mathfrak{m}(z\,\cdot\,)\big)y=Q(z)$, for some polynomial $P,Q\in\C[z]$ with $P(0)\neq 0$.
\item[(2)] There exist $r\in\N$ and complex numbers $a_1,\dots,a_r$ such that
\begin{equation}\label{equa_coeffic_polynomial}
\frac{f_{j}}{m(j)}=\sum_{k=j-r}^{j-1}a_{j-k}\frac{f_k}{m(k)}
\end{equation}
holds for every $j>r$.
\item[(3)] $\widehat{f}$ is $\M$-summable in any direction $d$ but the arguments of the roots of a polynomial
$$h(z)=1-a_1z-\dots-a_rz^r,$$
and its sum, say $f\in\mathcal{O}(S_d)$ for some sector $S_d$ of opening larger than $\pi\o(\M)$, is an actual solution of the problem $P\big(z\partial_\mathfrak{m}(z\,\cdot\,)\big)y=Q(z)$, for some polynomials $P,Q\in\C[z]$ with $P(0)\neq 0$, which do not depend on the choice of $d$.
\end{enumerate}
\end{coro}

\begin{proof}
$(1)\Rightarrow(2)$ Without loss of generality, we may write $P(z)=1-\sum_{p=1}^{r_0}a_pz^p$, with $a_{r_0}\neq 0$, and $Q(z)=\sum_{p=0}^{s}b_{p}z^p$, with $b_s\neq 0$. As in the previous proof, for $j> r_0$ the coefficient of $z^j$ in $P\big(z\partial_\mathfrak{m}(z\,\cdot\,)\big)\widehat{f}$ is
$$f_j-m(j)\sum_{k=j-r}^{j-1}a_{j-k}\frac{f_k}{m(k)},$$
which should equal $0$ as long as $j>s$. So, in case $s\le r_0$ we put $r:=r_0$, and (\ref{equa_coeffic_polynomial}) holds for every $j>r$. Whenever $s>r_0$ we define $r:=s$ and $a_{r_0+1}:=0,\dots,a_s:=0$, in such a way that (\ref{equa_coeffic_polynomial}) again holds for every $j>r$.\par\noindent
$(2)\Rightarrow(3)$ Consider the polynomial
$h(z)=1-a_1z-\dots-a_rz^r$. If we put $f_0:=0$,\dots,$f_{1-r}:=0$, it is clear by~\eqref{equa_coeffic_polynomial} that
$$h(z)\cdot\widehat{\mathcal{B}}_{\mathfrak{m}}\widehat{f}(z)=\sum_{j= 1}^r\Big(\frac{f_{j}}{m(j)}-\sum_{k=j-r}^{j-1}a_{j-k}\frac{f_k}{m(k)}\Big)z^j,$$
so that the function $\widehat{\mathcal{B}}_{\mathfrak{m}}\widehat{f}$ is a rational function, holomorphic in $\C$ except for the roots of $h$, and with exponential growth order 0 in any open unbounded sector with vertex at 0 and not containing any of these roots. So, $\widehat{f}$ is $\M$-summable in every direction $d$ which is not an argument of any of the roots of $h$. Consider the polynomial
$$Q(z)=\sum_{j= 1}^r\Big(f_{j}-m(j)\sum_{k=j-r}^{j-1}a_{j-k}\frac{f_k}{m(k)}\Big)z^j.$$
As before, Watson's lemma guarantees that its sum solves the equation $P(z\partial_\mathfrak{m}(z\,\cdot\,))y=Q(z)$ in a suitably wide sector $S_d$, bisected by $d$, where it is defined.\par\noindent
$(3)\Rightarrow(1)$ Immediate.
\end{proof}

The rest of this section is devoted to the study of necessary and sufficient conditions for a formal power series to be a solution of a family of ordinary differential equations induced by some polynomial differential operators. As we will see, it is natural to restrict attention to values $s:=1/k\in\N$ for the Gevrey order. Moreover, this formal solution turns out to be $k$-summable along appropriate directions which are determined.

For this purpose, we give a property of formal Borel transforms which can be found in the literature (see Proposition 4 in~\cite{threefold}, for instance). Its proof is direct, so we omit it.

\begin{lemma}\label{lemaBorelformalordens}
For every $\widehat{f}(z)\in\C[[z]]$, consider the modified Borel transform of order $k=1/s$, where $s\in\N$, given by
$$
\widetilde{\mathcal{B}}_{k}\widehat{f}(z)=\sum_{p\ge0}\frac{f_{p}}{p!^{s}}z^{p}
$$
(observe that $\widetilde{\mathcal{B}}_{1}=\widehat{\mathcal{B}}_{1}$). The following formal equality holds:
$$\tau\widetilde{\mathcal{B}}_{k}(\widehat{f})(\tau)= \widetilde{\mathcal{B}}_{k}\big(z(z\partial_z+1)^s\widehat{f}(z)\big)(\tau).$$
\end{lemma}

\begin{rema}\label{remaordens}
For what follows, we need to take into account that not only the sequences $(\Gamma(1+sp))_{p\in\N_0}$ and $(p!^s)_{p\in\N_0}$ are equivalent, but the second one is also a sequence of moments for a suitable kernel of $k$-summability, as defined by W. Balser~\cite[Section\ 5.5]{balser}. This last statement stems from~\cite[Theorem\ 31]{balser}, since $(p!)_{p\in\N_0}$ is a sequence of moments,
and any other sequence which may be expressed as a finite term-by-term product of several moment sequences is again a moment sequence. Moreover, a series $\widehat{f}(z)=\sum_{p\ge0}f_{p}z^{p}$ is $k$-summable in direction $d$ if, and only if, its modified Borel transform of order $k$, $\widetilde{\mathcal{B}}_k\widehat{f}(z)$, satisfies the same properties indicated in Example~\ref{remaKsummability} for the classical Borel transform, in particular the estimates in~\eqref{e163}.
\end{rema}

We are ready to state our next two results, which are particular versions of Theorem~\ref{teoEqM} and Corollary~\ref{coroEqMpolynomial}, respectively.

\begin{theo}\label{teo305ordens}
Let $\widehat{f}(z)=\sum_{p\ge1}f_{p}z^{p}\in\C[[z]]$ be a formal power series and $s=1/k\in\N$. The following statements are equivalent:
\begin{enumerate}
\item[(1)] $\widehat{f}$ is the formal solution of $P\big(z(z\partial_z+1)^s\big)y=g(z)$, for some polynomial $P\in\C[z]$ with $P(0)\neq 0$ and some $g\in\C\{z\}$.
\item[(2)] There exist $r\in\N$, complex numbers $a_1,\dots,a_r$ and positive constants $C,M$ such that
\begin{equation}\label{e228ordens}
\Big|\frac{f_{j}}{j!^s}-\sum_{k=j-r}^{j-1}a_{j-k}\frac{f_k}{k!^s}\Big|\le \frac{CM^j}{j!^s}
\end{equation}
holds for every $j>r$.
\item[(3)] $\widehat{f}$ is $k$-summable in any direction $d$ but the arguments of the roots of a polynomial
$$h(z)=1-a_1z-\dots-a_rz^r,$$
and its sum, say $f\in\mathcal{O}(S_d)$ for some sector $S_d$ of opening larger than $\pi/k$, is an actual solution of the problem $P\big(z(z\partial_z+1)^s\big)y=g(z)$, for some polynomial $P\in\C[z]$, with $P(0)\neq 0$, and some $g\in\C\{z\}$ which do not depend on the choice of $d$.
\end{enumerate}
\end{theo}

\begin{proof}
$(1)\Rightarrow(2)$ It suffices to check by induction that for every $m\in\N$ one has
$$\big(z(z\partial_z+1)^s\big)^m\left(\sum_{p\ge1}f_pz^p\right)=
\sum_{p\ge m+1}\Big(\frac{p!}{(p-m)!}\Big)^sf_{p-m}z^p,$$
and then repeat the proof in Theorem~\ref{teoEqM} for the corresponding implication.\par\noindent
$(2)\Rightarrow(3)$ For the modified Borel transform of order $k$ of $\widehat{f}$ and the polynomial $h(z)=1-a_1z-\dots-a_rz^r$ we obtain
$$h(z)\cdot\widetilde{\mathcal{B}}_k\widehat{f}(z)=\sum_{j\ge 1}\Big(\frac{f_{j}}{j!}-\sum_{k=j-r}^{j-1}a_{j-k}\frac{f_k}{k!}\Big)z^j$$
(with $f_0:=0$,\dots,$f_{1-r}:=0$). By (\ref{e228ordens}), the series on the right-hand side defines an entire function of exponential order less than or equal to $k$, and by Remark~\ref{remaordens} we deduce that $\widehat{f}$ is $k$-summable in every direction $d$ which is not an argument of any of the roots of $h$. Moreover, the $k$-sum of $\widehat{f}$ is a solution of the differential equation, again by Watson's Lemma.\par\noindent
$(3)\Rightarrow(1)$ Trivial.
\end{proof}

\begin{coro}\label{coroordenspolynomial}
Let $\widehat{f}(z)=\sum_{p\ge1}f_{p}z^{p}\in\C[[z]]$ be a formal power series and $s=1/k\in\N$. The following statements are equivalent:
\begin{enumerate}
\item[1.-] $\widehat{f}$ is the formal solution of $P\big(z(z\partial_z+1)^s\big)y=Q(z)$, for some polynomials $P,Q\in\C[z]$, with $P(0)\neq 0$.
\item[2.-] There exist $r\in\N$ and $a_1,\dots,a_r\in\C$ such that $\widehat{f}$ verifies the recursion formula
    \begin{equation}\label{equarecursionordens}
\frac{f_{j}}{j!^s}=\sum_{k=j-r}^{j-1}a_{j-k}\frac{f_k}{k!^s}
\end{equation}
for every $j>r$.
\item[3.-] $\widehat{f}$ is $k$-summable in any direction $d$ but the arguments of the roots of a polynomial
$$1-a_rz-\dots-a_1z^r,$$
and its sum, say $f\in\mathcal{O}(S_d)$ for some sector $S_d$ of opening larger than $\pi/k$, is an actual solution of the problem $P\big(z(z\partial_z+1)^s\big)y=g(z)$, for some polynomials $P,Q\in\C[z]$, with $P(0)\neq 0$, which do not depend on $d$.
\end{enumerate}
\end{coro}

\begin{proof}
It resembles that of Corollary~\ref{coroEqMpolynomial}, resting now on Theorem~\ref{teo305ordens}.
\end{proof}

\begin{exam}\label{examEulerseries}
A classical example is given by the Euler series
$$\widehat{g}(z)=\sum_{p\ge0}(-1)^pp!z^{p+1},$$
which turns out to be a formal solution of the differential equation
\begin{equation}\label{eeq}
z^2y'+y=z.
\end{equation}
On the other hand, the function
$$g(z)=\int_{0}^{\infty}\frac{e^{-t/z}}{1+t}dt$$
is holomorphic in the sector $S_{0}(1)$, and it is not difficult to check that $g$ is a solution of (\ref{eeq}). Observe that
$$\int_{0}^{\infty}\frac{e^{-t/z}}{1+t}dt-\sum_{p=0}^{N-1}(-1)^{p}p!z^{p+1}= (-1)^{N}\int_{0}^{\infty}e^{-t/z}\frac{t^{N}}{1+t}dt,\quad z\in S_{0}(1),$$
so that
$$\left| \int_{0}^{\infty}e^{-t/z}\frac{t^{N}}{1+t}dt\right|\le N!|z|^{N+1},\quad z\in S_{0}(1).$$
Indeed, by rotating the half-line of integration the definition of $g$ can be analytically extended to wider sectors in which $g$ is the unique (by Watson's Lemma) holomorphic function admitting $\widehat{g}$ as its Gevrey asymptotic expansion of order 1, and so $\widehat{g}$ is 1-summable in direction $d=0$ with sum $g$.
The procedure in Example~\ref{remaKsummability} may be applied to
$\widehat{f}(z)=\sum_{p\ge0}(-1)^pp!z^{p}$, observe that
$\widehat g(z)=z\widehat{f}(z)$ holds. Then the $1$-sum of $\widehat f$ in direction 0, say $f$, is computed according to~\eqref{equakSumDirectiond}, and we have that $g(z)=zf(z)$ for every $z\in S_0(1)$, as expected.

In order to apply Corollary~\ref{coroordenspolynomial}, one defines $f_{p}=(-1)^{p}p!$ for $p\in\N$ and $a_1=-1$, then
$$f_{j}=ja_1f_{j-1}$$
for every $j>1$. As condition~\eqref{equarecursionordens} holds for $r=1$ and $s=1$, one concludes that $\widehat{f}_0=\sum_{p\ge 1}(-1)^pp!z^{p}=\widehat{f}-1$ is $1$-summable in every direction $d$ but the argument of the zeros of the equation $1-a_1z=0$, i.e., $1+z=0$, which is that of the negative real axis. The polynomials appearing in Corollary~\ref{coroordenspolynomial} are $P(z)=1-a_1z=1+z$ and $Q(z)=f_1z=-z$, and one may easily check that $P(z(z\partial_z+1))\widehat{f}_0 =z^2(\widehat{f}_0)'+z\widehat{f}_0+\widehat{f}_0=-z$, what, taking into account that $\widehat{g}=z(1+\widehat{f}_0)$, amounts to $z^2(\widehat{g})'+\widehat{g}=z$, as it should be the case.
\end{exam}


We mention an easy application of the previous result.

\begin{coro}
Let $a\in\C$ and $h\in\N$. The formal power series
$$\widehat{f}(z)=\sum_{p\ge0} (hp+1)!a^pz^{hp+1}  $$
is $1$-summable in any direction $d$ which is not an argument for any of the $h$-th roots of $1/a$.
\end{coro}
\begin{proof}
Of course, it is enough to reason with the series without its constant term. Let us write $f_p$ for the coefficient of $z^p$ in $\widehat{f}$.
One can easily prove that
$$f_{j}=j(j-1)\dots(j-h+1)af_{j-h}$$
for every $j>h$.
%
So, one can take $r=h$, $a_1=a_2=\dots=a_{h-1}=0$ and $a_h=a$, and apply Corollary~\ref{coroordenspolynomial}. One obtains that $\widehat{f}$ is $1$-summable in every direction but the ones given by the arguments of the complex numbers which verify $1-az^{h}=0$, i.e., the $h$-th roots of $1/a$.
\end{proof}

\begin{rema}\label{remaStokesPhenomGevreyCase}
We will say some words about the Stokes' phenomenon. We start with the situation in Theorem~\ref{teo305ordens} for $s=1$. With the notation in the implication $(2)\Rightarrow(3)$, we have that the formal Borel transform of order 1 of $\widehat{f}$,
$\widehat{\mathcal{B}}_1\widehat{f}(z)=\sum_{p\ge 1}f_pz^p/p!$, equals $b(z)/h(z)$, where
$h(z)=1-a_1z-\dots-a_rz^r$. Consider a singular direction $d_0$ for the 1-summability of $\widehat{f}$, given by an argument of some of the roots of $h$, and suppose the directions in $(d_1,d_2)\setminus\{d_0\}$ are nonsingular. Our aim is to describe the jump between the two solutions of the equation, which we denote by $f_{d_0}^-$ and $f_{d_0}^+$, obtained as 1-sums of $\widehat{f}$ in directions $d^-\in(d_1,d_0)$ and, respectively, in directions $d^+\in(d_0,d_2)$ (the sums in different directions within one of these intervals glue together to define a solution in a wide sector). Observe that, due to our knowledge about the region of convergence of the Laplace transforms involved in the $1$-sum, it is clear that for every $\varepsilon\in(0,1)$ there exists $r_{\varepsilon}>0$ such that the sector $S:=S_{d_0}(1-\varepsilon,r_{\varepsilon})$ is contained in the intersection of the domains of definition of the corresponding functions $f_{d_0}^-$ and $f_{d_0}^+$. Take $z\in S$, then there exist $d^-\in(d_1,d_0)$ and $d^+\in(d_0,d_2)$ such that, according to~\eqref{equakSumDirectiond} for $k=1$,
$$
f_{d_0}^-(z)=\frac{1}{z}\int_{0}^{\infty(d^-)}\frac{b(u)}{h(u)}\,e^{-u/z}\,du,\qquad
f_{d_0}^+(z)=\frac{1}{z}\int_{0}^{\infty(d^+)}\frac{b(u)}{h(u)}\,e^{-u/z}\,du,
$$
and so the jump is
\begin{align}\label{equaJump}
f_{d_0}^+(z)-f_{d_0}^-(z)&=\frac{1}{z}\int_{0}^{\infty(d^+)}\frac{b(u)}{h(u)}\,e^{-u/z}\,du -\frac{1}{z}\int_{0}^{\infty(d^-)}\frac{b(u)}{h(u)}\,e^{-u/z}\,du\nonumber\\
&=\lim_{R\to\infty}\left(\frac{1}{z}\int_{[0,R]e^{id^+}}\frac{b(u)}{h(u)}\,e^{-u/z}\,du -\frac{1}{z}\int_{[0,R]e^{id^-}}\frac{b(u)}{h(u)}\,e^{-u/z}\,du\right),
\end{align}
where $[0,R]e^{id^-}$, respectively $[0,R]e^{id^+}$, stands for the directed segment joining 0 and $Re^{id^-}$, resp. $Re^{id^+}$.
We choose $R$ larger than the modulus of any of the zeros of the function $h$.
Consider now the circular arc $\gamma_R$ parameterized by $\gamma_R(t)=Re^{it}$, $t\in[d^-,d^+]$. Due to the exponential decrease of the function under the integral sign, it is straightforward to check that
$$
\lim_{R\to\infty}\frac{1}{z}\int_{\gamma_R}\frac{b(u)}{h(u)}\,e^{-u/z}\,du=0.
$$
Hence, for the closed path $\Gamma_R:=[0,R]e^{id^-}+\gamma_R-[0,R]e^{id^+}$  we may write, taking into account~\eqref{equaJump},
\begin{equation*}
f_{d_0}^+(z)-f_{d_0}^-(z)=-\lim_{R\to\infty}\frac{1}{z}\int_{\Gamma_R}\frac{b(u)}{h(u)}\,e^{-u/z}\,du.
\end{equation*}
By Cauchy's Theorem, the last integral equals $2\pi i$ times the sum of the residues of the inner function in the singularities lying within the curve $\Gamma_R$. Due to our choice of $R$, this sum remains constant as $R\to\infty$ and it takes into account the residues in the finitely many zeros of the function $h$ lying on the direction $d_0$, so it may be easily computed in concrete cases. In general, we may only be more specific under some simplifying assumption: For example, suppose $\alpha_0$ is the only zero of $h$ with argument $d_0$. In case it is moreover a simple zero, we deduce that
$$
f_{d_0}^+(z)-f_{d_0}^-(z)=-2\pi i\text{res}(\frac{b(u)}{zh(u)}\,e^{-u/z},\alpha_0)=-2\pi i\frac{b(\alpha_0)}{h'(\alpha_0)}\frac{1}{z}e^{-\alpha_0/z}.
$$
If $\alpha_0$ is a zero of order greater than one, we may write
$$
\frac{1}{z}e^{-u/z}=e^{-\alpha_0/z}\sum_{k=0}^\infty\frac{(-1)^k}{k!z^{k+1}}(u-\alpha_0)^k,\quad u\neq\alpha_0;
$$
after expanding $b(u)/h(u)$ in its Laurent expansion around $u=\alpha_0$ and multiplying both expressions, we see that the jump will be a finite linear combination of functions of the type $z^{-m}e^{-\alpha_0/z}$ with $m\in\N$. In either case, since $e^{-\alpha_0/z}$ is exponentially flat in the half-plane bisected by direction $d_0$ (the argument of $\alpha_0$), we observe that the jump also is exponentially flat in $S$, what agrees with the fact that the functions $f_{d_0}^+$ and $f_{d_0}^-$ share the same $1$-Gevrey asymptotic expansion, $\widehat{f}$, in the corresponding sectors.

Similar comments could be made about the jump between neighboring solutions for the case $s\ge 2$, but we will not provide further details.

We may also describe the Stokes' phenomenon in the situation of Theorem~\ref{teoEqM}. Now the formal $\mathfrak{m}$-Borel transform of  $\widehat{f}$ is $b(z)/h(z)$. Following the previous terminology and notation, for every $z$ in the sector $S$ contained in the intersection of the corresponding domains of definition, the solutions at both sides of the singular direction $d_0$ are
$$
f_{d_0}^-(z)=\int_{0}^{\infty(d^-)}\frac{b(u)}{h(u)}\,e(u/z)\,\frac{du}{u},
\qquad
f_{d_0}^+(z)=\int_{0}^{\infty(d^+)}\frac{b(u)}{h(u)}\,e(u/z)\,\frac{du}{u},
$$
and so the jump is
\begin{align*}
f_{d_0}^+(z)-f_{d_0}^-(z)
=\lim_{R\to\infty}\left(\int_{[0,R]e^{id^+}}\frac{b(u)}{h(u)}\,e(u/z)\,\frac{du}{u} -\int_{[0,R]e^{id^-}}\frac{b(u)}{h(u)}\,e(u/z)\,\frac{du}{u}\right).
\end{align*}
Due to the exponential decrease of the function $e(z)$ at infinity in suitable sectors, one can check that
$$
\lim_{R\to\infty}\int_{\gamma_R}\frac{b(u)}{h(u)}\,e(u/z)\,\frac{du}{u}=0,
$$
and so
\begin{equation*}
f_{d_0}^+(z)-f_{d_0}^-(z)=-\lim_{R\to\infty}
\int_{\Gamma_R}\frac{b(u)}{h(u)}\,e(u/z)\,\frac{du}{u}.
\end{equation*}
Again by Cauchy's Theorem, this limit is
$2\pi i$ times the sum of the residues of the inner function in the finitely many zeros of the function $h$ lying on the direction $d_0$.
If we suppose $\alpha_0$ is the only zero of $h$ with argument $d_0$ and it is simple, we see that
$$
f_{d_0}^+(z)-f_{d_0}^-(z)=-2\pi i\text{res}(\frac{b(u)}{uh(u)}\,e(u/z),\alpha_0)=
-2\pi i\frac{b(\alpha_0)}{\alpha_0h'(\alpha_0)}e(\alpha_0/z).
$$
As before, since $e(\alpha_0/z)$ is $\M$-flat in the unbounded sector bisected by $d_0$ and with opening $\pi\omega(\M)$, we deduce that the jump is $\M$-flat, as it should be the case for the difference of two functions sharing a same $\M$-asymptotic expansion.
\end{rema}

\section{Some results for $q$-difference equations}\label{sectqGevrey}

Concerning summability, one can deal with formal power series whose coefficients' growth is not governed by a strongly regular sequence. This is the case of the formal power series
\begin{equation}\label{e296}
\sum_{p\ge1}(-1)^{p}q^{\frac{p(p-1)}{2}}z^{p},
\end{equation}
which is a formal solution of the $q$-difference equation $zy(qz)+y(z)=z$, but whose coefficients have a too fast rate of growth. This behavior is quite natural when regarding $q$-difference equations (see~\cite{malek1},~\cite{ramiszhang} for example). The next definitions can be found in~\cite{zhang}.

\begin{defi}
Let $S=S_{d}(\theta,r)$ be a sector and $q\in\R$ with $q>1$. We say a function $f\in\mathcal{O}(S)$ admits $\widehat{f}(z)=\sum_{n=0}^{\infty}a_nz^n\in\C[[z]]$ as its $q$-Gevrey asymptotic expansion in $S$ if for every proper and bounded subsector $T$ of $S$ there exist $A=A(T)>0$ and $C=C(T)>0$ such that for every $n\in\N_0$ one has
$$\big|f(z)-\sum_{p=0}^{n-1}a_pz^p\big|\le CA^n q^{\frac{n(n-1)}{2}}|z|^n,\quad z\in T.$$
\end{defi}
A formal power series with complex coefficients, $\widehat{f}(z)=\sum_{p\ge0}f_{p}z^{p}$, is said to be $q$-Gevrey (of order 1) if $|f_{p}|\le C A^{p}q^{\frac{p(p-1)}{2}}$ for every $p\in\N_0$ and suitable $C,A>0$. In this case, its formal $q$-Borel transform is defined by
$$ \widehat{\mathcal{B}}_{q}\widehat{f}(z)=\sum_{p\ge0}q^{\frac{-p(p-1)}{2}}f_pz^{p},$$
and it is convergent in a neighborhood of the origin.

We put $\pi_{q}$ for the constant $\ln(q)\prod_{p\ge0}(1-q^{-p-1})^{-1}$, and consider the Jacobi $\theta$ function, holomorphic in $\C^{\star}:=\C\setminus\{0\}$ and defined by
$$\theta(z)=\sum_{p\in\Z}q^{-\frac{p(p-1)}{2}}z^{p},\qquad z\in\C^{\star}.$$
Jacobi theta function naturally appears in the context of $q$-difference equations, for it satisfies the functional equation $zqy(z)=y(qz)$. Moreover, its growth at infinity is stated in the following result (see~\cite{lastramalek} for the details).

\begin{lemma}\label{lemaCrecimJacobi}
There exists $C>0$ such that for every $z\in\C^{\star}$ one has
$$|\theta(z)|\le C(1+|z|)\exp\big(\frac{\log^2(|z|)}{2\log(q)}\big).$$
\end{lemma}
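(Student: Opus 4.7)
The plan is to bound $|\theta(z)|$ term-by-term and recognise that, after completing the square in the exponent of each summand, the sum reduces to a uniformly bounded shifted Gaussian sum times a universal factor that already has the required shape. Set $r=|z|$, $L=\log r$, $\ell=\log q>0$, and split
\[
\theta(z)=1+\sum_{k\ge 1}q^{-k(k-1)/2}z^{k}+\sum_{k\ge 1}q^{-k(k+1)/2}z^{-k}.
\]

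For the first tail I would write the modulus of the $k$-th summand as $\exp\!\bigl(-\tfrac{\ell}{2}k(k-1)+kL\bigr)$ and complete the square in $k$: with $\alpha:=\tfrac12+L/\ell$ one finds
\[
-\tfrac{\ell}{2}k(k-1)+kL=-\tfrac{\ell}{2}(k-\alpha)^{2}+\tfrac{\ell}{2}\alpha^{2},
\qquad
\tfrac{\ell}{2}\alpha^{2}=\tfrac{\ell}{8}+\tfrac{L}{2}+\tfrac{L^{2}}{2\ell}.
\]
Hence the common factor extracted from every term of the first tail is $q^{1/8}\,r^{1/2}\,\exp\!\bigl(\log^{2}r/(2\log q)\bigr)$, exactly the shape appearing in the statement (with an innocuous $r^{1/2}$ absorbed later into $1+r$). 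The identical calculation with $p=-k$ handles the second tail: the exponent becomes $-\tfrac{\ell}{2}(k+\alpha)^{2}+\tfrac{\ell}{2}\alpha^{2}$, yielding the \emph{same} common factor.

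The residual sums are $\sum_{k\ge 1}e^{-(\ell/2)(k-\alpha)^{2}}$ and $\sum_{k\ge 1}e^{-(\ell/2)(k+\alpha)^{2}}$. The key observation is that each is bounded, uniformly in $\alpha\in\R$, by the full lattice sum $K:=\sum_{k\in\Z}e^{-(\ell/2)(k-\alpha)^{2}}$, and this in turn is $\ell$-periodic-like in $\alpha$ and hence majorised by a finite constant depending only on $q$ (one can, for instance, compare with the integral $\int_{\R}e^{-(\ell/2)t^{2}}dt=\sqrt{2\pi/\ell}$ plus one further Gaussian tail). Combining, there is $C_{1}=C_{1}(q)$ with
\[
\Bigl|\theta(z)\Bigr|\le 1+C_{1}\,r^{1/2}\exp\!\Bigl(\tfrac{\log^{2}r}{2\log q}\Bigr).
\]

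Finally I would absorb the constants and dispose of the $r^{1/2}$: since $\log^{2}r\ge 0$ the exponential factor is $\ge 1$, so $1\le(1+r)\exp(\log^{2}r/(2\log q))$, and also $r^{1/2}\le 1+r$. Taking $C$ to be the largest of the accumulated constants gives the stated inequality. The only slightly delicate step is the uniform-in-$\alpha$ control of the residual Gaussian sums; everything else is bookkeeping around completing the square.
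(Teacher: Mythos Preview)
Your argument is correct: completing the square in the exponent of each summand extracts the factor $q^{1/8}r^{1/2}\exp\bigl(\log^{2}r/(2\log q)\bigr)$ uniformly from both tails, and the leftover Gaussian lattice sum $\sum_{k\in\Z}e^{-(\ell/2)(k-\alpha)^{2}}$ is indeed bounded independently of $\alpha$ (it is a continuous $1$-periodic function of $\alpha$, or one may use the integral comparison you sketch). The absorption of $1$ and $r^{1/2}$ into $(1+r)$ at the end is fine.

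As for comparison with the paper: the paper does not actually prove this lemma. It simply states the estimate and refers the reader to~\cite{lastramalek} for the details. So your proposal supplies what the paper omits. The route via completing the square and a uniform Gaussian-sum bound is the standard one and is almost certainly what the cited reference does as well; there is no meaningfully different approach on offer in the present paper to contrast with.

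One small remark: when you write ``plus one further Gaussian tail'' for the integral comparison, you might make explicit that the term with $|k-\alpha|$ minimal contributes at most $1$, while all remaining terms are dominated by $\int_{\R}e^{-(\ell/2)t^{2}}\,dt$ via monotonicity on each side of $\alpha$. This makes the uniform-in-$\alpha$ bound fully transparent rather than merely plausible.
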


Suppose a formal power series $\widehat{f}(z)=\sum_{p\ge0}f_{p}z^{p}$ is $q$-Gevrey, and its formal $q$-Borel transform converges to a holomorphic function $\varphi$ (defined in a neighborhood of the origin) which can be analytically continued to a function $\Phi$ in an unbounded sector $S$ with bisecting direction $d\in\R$. In addition to this, we assume there exist $C>0,\mu\in\R$ verifying that
\begin{equation}\label{e308}
|\Phi(z)|\le C|z^{\mu}e^{\frac{(\log(z))^{2}}{2\ln(q)}}|,\qquad z\in S.
\end{equation}

In this situation, one can define the function
$$\mathcal{L}_{q}^{d}\Phi(z)=
\frac{1}{\pi_{q}}\int_{0}^{\infty(d)} \frac{\Phi(\xi)}{\theta\left(\frac{\xi}{z}\right)}\frac{d\xi}{\xi},$$
which turns out to be a holomorphic function in some sector $S$ in the Riemann surface of the logarithm $\mathcal{R}$ with opening larger than $2\pi$. Moreover, $\mathcal{L}_{q}^{d}\Phi$ admits $\widehat{f}$ as its $q$-Gevrey asymptotic expansion in $S$. Observe that for the sequence $\M=(q^{\frac{p(p-1)}{2}})_{p\in\N_0}$ the value $\o(\M)$ is $\infty$, what implies that the asymptotic Borel map is not injective, no matter what the opening of the sector is. So, $\mathcal{L}_{q}^{d}\Phi$ is not the only function admitting $\widehat{f}$ as $q$-Gevrey asymptotic expansion in $S$; however, as it is pointed out in~\cite{zhang}, this is the only function with that property in $S$ whose variation is suitably prescribed.
In this situation, one may say that $\widehat{f}$ is $q$-Gevrey summable in direction $d$.

It is important to describe a $q$-analog of the properties of formal Borel transform.

\begin{lemma}
For every $\widehat{f}(z)\in\C[[z]]$, the following formal equality holds:
$$\tau\widehat{\mathcal{B}}_q(\widehat{f})(\tau)= \widehat{\mathcal{B}}_q((z\sigma_q)\widehat{f}(z))(\tau),$$
where $\sigma_q$ stands for the dilation operator $z\mapsto qz$ extended in the natural manner to formal power series.
\end{lemma}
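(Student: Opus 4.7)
The plan is to carry out a direct coefficient-by-coefficient comparison on formal power series, since both sides of the claimed identity are $\C$-linear in $\hat f$ and the operators $\hat{\mathcal B}_q$, $\sigma_q$ and multiplication by $z$ (or $\tau$) all act explicitly on Taylor coefficients. Writing $\hat f(z)=\sum_{p\ge 0}f_p z^p$, I would first unfold the inner side: $\sigma_q\hat f(z)=\sum_{p\ge 0}q^p f_p z^p$, and consequently $(z\sigma_q)\hat f(z)=\sum_{p\ge 1}q^{p-1}f_{p-1}z^p$. Applying $\hat{\mathcal B}_q$ termwise then yields
\[
\hat{\mathcal B}_q\bigl((z\sigma_q)\hat f\bigr)(\tau)=\sum_{p\ge 1}q^{-\frac{p(p-1)}{2}}\,q^{p-1}\,f_{p-1}\,\tau^p.
\]

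On the other side, a single shift gives
\[
\tau\,\hat{\mathcal B}_q(\hat f)(\tau)=\sum_{p\ge 0}q^{-\frac{p(p-1)}{2}}f_p\tau^{p+1}=\sum_{p\ge 1}q^{-\frac{(p-1)(p-2)}{2}}f_{p-1}\tau^p.
\]
Matching the two expressions thus reduces to the elementary identity
\[
-\frac{p(p-1)}{2}+(p-1)=-\frac{(p-1)(p-2)}{2},\qquad p\ge 1,
\]
which is immediate after factoring out $(p-1)$ from the left-hand side. Since the identity holds for every $p\ge 1$ and both series also vanish in the $p=0$ term, the two formal power series in $\tau$ coincide.

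The proof is purely formal and does not require any convergence considerations, so no analytic obstacle arises; the only point that needs a line of verification is the quadratic exponent identity above, which is the whole content of the lemma and is the reason for the precise normalization $q^{-p(p-1)/2}$ in the definition of $\hat{\mathcal B}_q$. For this reason I would present the argument as a short direct calculation rather than invoke any auxiliary machinery.
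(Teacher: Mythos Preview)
Your argument is correct and is precisely the direct coefficient comparison the paper has in mind; the paper does not spell out a proof for this lemma, treating it (like the analogous Lemmas for $\hat{\mathcal B}_1$ and $\hat{\mathcal B}_{\mathfrak m}$) as an immediate consequence of the definitions. Your verification of the exponent identity $-\frac{p(p-1)}{2}+(p-1)=-\frac{(p-1)(p-2)}{2}$ is exactly the one-line check that justifies the omission.
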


One can adapt the main result in the previous sections for this type of asymptotics.



\begin{theo}\label{teofinal}
Let $ \widehat{f}(z)=\sum_{p\ge1}f_{p}z^p\in\C[[z]]$ be a formal power series.

The following statements are equivalent:
\begin{enumerate}
\item[(1)] $\widehat{f}$ is the formal solution of $P(z\sigma_q)y=g(z)$ for some polynomials $P$ with $P(0)\neq 0$ and some $g\in\C\{z\}$.
\item[(2)] There exist $r\in\N$, $a_1,\dots,a_r\in\C$  and positive constants $C,K$ such that
\begin{equation}\label{e228tris}
\Big|\frac{f_{j}}{q^{j(j-1)/2}}- \sum_{k=j-r}^{j-1}a_{j-k}\frac{f_k}{q^{k(k-1)/2}}\Big|\le \frac{CK^j}{q^{j(j-1)/2}}
\end{equation}
holds for every $j>r$.
\item[(3)] $\widehat{f}$ is $q$-Gevrey summable in any direction $d$ but the arguments of the roots of a polynomial
$$h(z)=1-a_1z-\dots-a_rz^r,$$
and its sum, say $f\in\mathcal{O}(S_d)$ for some sector $S_d$ of opening larger than $2\pi$, is a quasi-solution of the problem $P(z\sigma_q)y=g(z)$, for some polynomial $P\in\C[z]$, with $P(0)\neq 0$, and some $g\in\C\{z\}$ which do not depend on the choice of $d$, in the sense that $P(z\sigma_q)f-g$ admits $\widehat{0}$ as its $q$-Gevrey asymptotic expansion in $S_d$.
\end{enumerate}
\end{theo}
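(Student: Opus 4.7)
The plan is to mirror the three-step structure of the proofs of Theorem~\ref{teo305} and Theorem~\ref{teoEqM}, replacing the operator $z^2\partial_z+z$ (resp. $z\partial_\mathfrak{m}(z\,\cdot\,)$) by $z\sigma_q$, and the Borel transform by $\hat{\mathcal{B}}_q$. The computational core is the formula $(z\sigma_q)^m z^p = q^{mp+m(m-1)/2}z^{p+m}$, obtained by induction, which gives
\[
(z\sigma_q)^m\Bigl(\sum_{p\ge 1}f_pz^p\Bigr)=\sum_{j\ge m+1}f_{j-m}\,q^{m(j-m)+m(m-1)/2}\,z^j.
\]

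For $(1)\Rightarrow(2)$, I would write $P(z)=1-\sum_{k=1}^{r}a_kz^k$ and $g(z)=\sum_{p\ge 0}b_pz^p$, plug $\hat{f}$ into $P(z\sigma_q)y=g$, and use the identity above. The resulting coefficient of $z^j$ (for $j>r$) is $f_j-\sum_{k=1}^{r}a_kf_{j-k}q^{kj-k(k+1)/2}$. Dividing by $q^{j(j-1)/2}$ and noting that $kj-k(k+1)/2-j(j-1)/2=-(j-k)(j-k-1)/2$, this turns exactly into $\frac{f_j}{q^{j(j-1)/2}}-\sum_{k=j-r}^{j-1}a_{j-k}\frac{f_k}{q^{k(k-1)/2}}$, which equals $b_j/q^{j(j-1)/2}$. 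Convergence of $g$ gives $|b_j|\le CM^j$, yielding (\ref{e228tris}).

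For $(2)\Rightarrow(3)$, the key is to study $b(z):=h(z)\cdot\hat{\mathcal{B}}_q\hat{f}(z)$ with $h$ as defined. A direct reordering, using $f_0=\cdots=f_{1-r}=0$, shows
\[
b(z)=\sum_{j\ge 1}\Bigl(\frac{f_j}{q^{j(j-1)/2}}-\sum_{k=j-r}^{j-1}a_{j-k}\frac{f_k}{q^{k(k-1)/2}}\Bigr)z^j,
\]
and by (\ref{e228tris}) this series is entire with coefficients bounded by $CM^j/q^{j(j-1)/2}$, whence a standard theta-type estimate shows it has the growth (\ref{e308}) for suitable $\mu\in\R$ in every sector of $\C^\star$. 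Dividing by $h$, the quotient $\Phi:=b/h$ is meromorphic on $\C$ with poles only at the roots of $h$ and inherits the same growth on any unbounded sector $S_d$ avoiding those roots. Then the $q$-Laplace operator $\mathcal{L}_q^d$ applied to $\Phi$ produces a holomorphic function $f$ on a sector of opening $>2\pi$ which admits $\hat{f}$ as $q$-Gevrey asymptotic expansion, showing $q$-Gevrey summability of $\hat{f}$ along every direction avoiding the arguments of the roots of $h$. Defining $g(z):=\sum_{j\ge 1}b_jz^j$ with $b_j$ the coefficients recovered in step $(1)\Rightarrow(2)$, the same coefficient computation shows $P(z\sigma_q)\hat{f}-g=\hat{0}$ as formal series, and since the operator $z\sigma_q$ and multiplication by polynomials preserve $q$-Gevrey asymptotic expansions, $P(z\sigma_q)f-g$ admits $\hat{0}$ as its $q$-Gevrey asymptotic expansion in $S_d$.

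The step $(3)\Rightarrow(1)$ is the immediate formal consequence of the stability of $q$-Gevrey asymptotic expansions under $\sigma_q$ and polynomial multiplication: if $P(z\sigma_q)f-g\sim_q\hat{0}$ and $f\sim_q\hat{f}$, then $P(z\sigma_q)\hat{f}-g=\hat{0}$ as formal series. The main obstacle here, and the reason one must weaken ``solution'' to ``quasi-solution'' in (3), is the non-injectivity of the asymptotic Borel map in the $q$-Gevrey setting: since $\o(\M)=\infty$ for $\M=(q^{p(p-1)/2})_{p\in\N_0}$, Watson's Lemma fails no matter how wide the sector is, so one cannot upgrade the null asymptotic expansion of $P(z\sigma_q)f-g$ to a pointwise vanishing as was done in Theorems~\ref{teo305}, \ref{teo305ordens} and \ref{teoEqM}. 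All other ingredients (boundedness of the coefficients after dividing by the $q$-Gevrey factor, the theta-type growth estimate from Lemma~\ref{lemaCrecimJacobi} for the kernel, the $q$-analog of the Borel-shift identity stated just before the theorem) are already available and fit together exactly as in the Gevrey case.
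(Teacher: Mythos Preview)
Your proposal is correct and follows essentially the same route as the paper's own proof, which explicitly says it omits the details as being analogous to Theorems~\ref{teo307} and~\ref{teoEqM} and only sketches the three implications. In fact you supply more computational detail than the paper does (the explicit formula $(z\sigma_q)^m z^p=q^{mp+m(m-1)/2}z^{p+m}$ and the exponent identity $kj-k(k+1)/2-j(j-1)/2=-(j-k)(j-k-1)/2$), and your explanation of why only a quasi-solution is obtained in (3), via the failure of Watson's Lemma when $\omega(\M)=\infty$, matches the paper's remark preceding the theorem.
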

\begin{proof}
We omit many of the details, being analogous to those of the proofs of Theorems~\ref{teoEqM} and~\ref{teo305ordens}. $(1)\Rightarrow(2)$ can be obtained following the same argument. 

$(2)\Rightarrow(3)$ If we put
$h(z)=1-a_1z-\dots-a_rz^r$ and $f_0:=0$,\dots,$f_{1-r}:=0$, it is clear that
$$h(z)\cdot\widehat{\mathcal{B}}_q\widehat{f}(z)=\sum_{j\ge 1}\Big(\frac{f_{j}}{q^{j(j-1)/2}}- \sum_{k=j-r}^{j-1}a_{j-k}\frac{f_k}{q^{k(k-1)/2}}\Big)z^j.$$
By (\ref{e228tris}), the series on the right-hand side, say $b(z)=\sum_{j\ge 1}b_jz^j/q^{j(j-1)/2}$, defines an entire function which, thanks to Lemma~\ref{lemaCrecimJacobi}, is bounded above by $C\Theta(A|z|)$ for suitable $A,C>0$. The function $b(z)/h(z)$ is similarly estimated in any open unbounded sector with vertex at 0 and not containing any of the roots of $h$. This entails $\widehat{f}$ is $q$-Gevrey summable in every direction $d$ which is not an argument of any of the roots of $h$.

Let $f$ be the $q$-Gevrey sum of $\widehat{f}$ in an appropriate direction $d\in\R$, defined in a sector $S_d$ of opening larger than $2\pi$. The coefficient of $z^j$ in $h(z\sigma_q)\widehat{f}$ is
$$f_{j}-q^{j(j-1)/2}\sum_{k=j-r}^{j-1}a_{j-k}\frac{f_k}{q^{k(k-1)/2}}=b_j.$$
Put $g(z):=\sum_{j\ge 1}b_jz^j$, which belongs to $\C\{z\}$.
Since $h(z\sigma_q)\widehat{f}(z)-g(z)$ is the null series, we see that $h(z\sigma_q)f(z)-g(z)$ admits null $q$-Gevrey asymptotic expansion in $\widetilde{S}_d$.\par\noindent 
$(3)\Rightarrow(1)$ Trivial.
\end{proof}

\begin{rema}
In this remark we recall the concept of variation stated in~\cite{zhang}: Given a sector of opening larger than $2\pi$, say $S$, and a function $f$ defined in $S$, we define the variation of $f$ at a point $z\in\{w\in S:we^{2\pi i}\in S\}$ by $\hbox{var}(f)(z)=f(ze^{2\pi i})-f(z)$.

Under the notation adopted in this work, Theorem 8 in~\cite{zhang} reads as follows: Suppose $\widehat{f}$ is such that $\phi:=\widehat{\mathcal{B}}_q\widehat{f}$ is holomorphic in a neighborhood of the origin and it can be analytically continued to a function $\Phi$ defined in a sector $S$ with bisecting direction $d\in\R$, in such a way that (\ref{e308}) holds for some $C>0$ and $\mu\in\R$. Then the function $f(z):=\mathcal{L}_q^d\Phi(z)$ is the only one admitting $\widehat{f}$ as its $q$-Gevrey asymptotic expansion and verifying that
$$\hbox{var } f(x)=\frac{2\pi i}{\pi_q}\sum_{n\in\Z}(-1)^nq^{-n(n+1)/2}\phi(-q^{n}x).$$

So, preserving the notations in the proof of the previous result, one can affirm that in the case that
\begin{equation}\label{e920}
\sum_{n\in\Z}(-1)^{n}q^{-\frac{n(n-1)}{2}}\left[\sum_{j=1}^{r}a_jz^jq^{j-1}\frac{b(-q^{n+j}z)}{h(-q^{n+j})}\right]\equiv0,
\end{equation}
then $f$ is an actual solution of the problem $h(z\sigma_q)f(z)=g(z)$.
\end{rema}

\begin{exam}
Concerning the series (\ref{e296}), which appears in \cite{zhang}, one can put $r=1$ and $a_{1}=(-1)$, so that $f_{j}=a_{1}q^{j-1}f_{j-1}$ for every $j\ge1$, with $f_{j}=(-1)^{j}q^{\frac{j(j-1)}{2}}$. One has that (\ref{e296}) is $q$-Gevrey summable in all directions $d$ but $d=-\pi$.

The equation (\ref{e920}) is satisfied in this example, as expected because the function $f$ constructed in Theorem~\ref{teofinal} turns out to be a solution of the $q$-difference equation $zy(qz)+y(z))=z$. Condition (\ref{e920}) reads as follows in this particular example: $h(z)=1+z$, $b(z)=-z$, and then
$$\sum_{n\in\Z}(-1)^nq^{-\frac{n(n-1)}{2}}\left[(-1)z\frac{b(-q^{n+1}z)}{h(-q^{n+1})}\right]\equiv0$$
if and only if
$$\sum_{n\in\Z}(-1)^nq^{-\frac{n(n+1)}{2}}\frac{q^{n+1}}{1-q^{n+1}}\equiv0.$$
The previous fact is equivalent to $$\sum_{n\in\Z}\frac{(-1)^n}{q^{\frac{n(n+1)}{2}}}\equiv0,$$
which is verified due to the fact that the $n$-th term in the summation is canceled by the term in position $-n-1$, for every $n\in\N_0$.
\end{exam}

\noindent\textbf{Aknowledgements:} A. Lastra and J. Sanz are partially supported by the project PID2019-105621GB-I00 of Ministerio de Ciencia e Innovaci\'on, Spain. J. R. Sendra is supported by the project PID2020-113192GB-I00 (Mathematical Visualization: Foundations, Algorithms and Applications) from the Spanish MICINN.

\vspace{0.4cm}
\noindent Affiliations:

\vspace{0.3cm}

\noindent \textbf{Alberto Lastra and J. Rafael Sendra}\\
Universidad de Alcal\'a. Departamento de F\'isica y Matem\'aticas. Universidad de Alcal\'a,\\ E--28871. Alcal\'a de Henares, Madrid, Spain\\

\noindent \textbf{Javier Sanz}\\
Departamento de \'Algebra, An\'alisis Matem\'atico, Geometr\'ia y Topolog\'ia,\\ Instituto de Investigaci\'on en Matem\'aticas de la Universidad de Valladolid, IMUVA,\\ Facultad de Ciencias, Universidad de Valladolid, 47011 Valladolid, Spain

\end{document}